\title{Partially ample line bundles on toric varieties}
\author{Nathan Broomhead, John Christian Ottem, Artie Prendergast-Smith}
\date{}
\def\Z{\text{\bf Z}}
\def\Q{\text{\bf Q}}
\def\R{\text{\bf R}}
\def\kk{\text{\bf k}}
\def\P{\text{\bf P}}
\def\arrow{\rightarrow}
\def\iso{\cong}
\def\Pic{\text{Pic}} 
\def\O{\mathcal O}
\def\E{{\mathcal E}}
\newtheorem{theorem}{Theorem}[section]
\newtheorem{lemma}[theorem]{Lemma}
\newtheorem{corollary}[theorem]{Corollary} 
\newtheorem{proposition}[theorem]{Proposition}
\newtheorem{definition}[theorem]{Definition}
\begin{document}

\maketitle

Ample line bundles are a fundamental concept in algebraic geometry,
encapsulating the central notion of positivity. A natural extension of
the notion of ampleness is that of $q$-ampleness, for nonnegative
integers $q$. Roughly speaking, $q$-ample line bundles on a variety
are those which ``kill cohomology in degrees above $q$.'' Line bundles
of this kind have been studied by a number of authors, including
Andreotti--Grauert \cite{AG}, Sommese \cite{Sommese},
Demailly--Peternell--Schneider \cite{DPS}, and Totaro \cite{Totaro}.

In this note, we verify some basic properties of $q$-ample line
bundles on toric varieties. We begin by reviewing basic
facts about $q$-ampleness. Then in Section \ref{cone} we study the structure of the set of
all $q$-ample line bundles on a simplicial toric variety. We show that the cone
of $q$-ample line bundles is the interior of a finite union of
rational polyhedral cones, and that it is defined by the vanishing of
asymptotic cohomological functions. As an illustration, in Section
\ref{examples} we give examples of explicit calculations of $q$-ample cones of two families of
toric varieties.

 In Sections \ref{restriction} and \ref{flip} we prove that $q$-ampleness of big line
 bundles on toric varieties is detected by restriction to torus-invariant divisors, and
 use this fact to study $q$-ampleness of the anticanonical bundle: in
 particular, we give an example showing that 1-ampleness of $-K$ is
 not preserved by flips, answering a question of
 Demailly--Peternell--Schneider. We finish in Section \ref{kodaira} by
 proving a Kodaira-type vanishing theorem for $q$-ample bundles on toric varieties.

\section{$q$-ample line bundles}
Throughout the paper we stick to toric varieties over algebraically closed fields of characteristic zero. We switch between additive and multiplicative notation for line bundles wherever convenient, and freely identify line bundles and the corresponding divisors. 

In the 1950s Serre gave a cohomological characterisation of ample line
bundles: a line bundle is ample if and only if some sufficiently high
power of it kills cohomology of any coherent sheaf in degrees above
zero. This characterisation suggests the following generalisation of
ampleness, introduced by Sommese \cite{Sommese}. (Note that Sommese's
definition requires that some power of the line bundle be globally
generated, but we drop that hypothesis here.) 

\begin{definition} \label{def-naive}
Let $X$ be a projective variety. A line bundle $L$ on $X$ is called
{\it $q$-ample} (for some integer $q \geq 0$) if for any coherent
sheaf $F$ on $X$, there exists a natural number $n_0$ (depending on
$F$) such that
\begin{align} \label{vanishing}
H^i(X, L^n \otimes F) &=0 \text{ for all } i>q \text{ and } n \geq n_0.
\end{align}
\end{definition}
Any line bundle on a variety of dimension $n$ is $n$-ample; by Serre,
$0$-ample is the same as ample.

At first sight the $q$-ample condition seems hard to check, since it
involves tensoring with an arbitrary coherent
sheaf. The following result reduces this to a much simpler condition:

\begin{proposition}[\cite{Ottem}, Lemma 2.1] \label{q-ample-prop}
Let $X$ be a projective variety over a field of characteristic 0, and
fix an ample line bundle $\O(1)$ on $X$. A line bundle $L$ on $X$ is
$q$-ample if and only if for each $r \geq 0$, we have $H^i(X,L^m \otimes
\O(-r))=0$ for $m$ sufficiently large and all $i>q$. In particular,
condition \eqref{vanishing} need only be checked for locally free
sheaves.
\end{proposition}For $q>0$, it remains an open problem to give a simple numerical or geometric condition, in the spirit of Kleiman's criterion, for $q$-ampleness.

\section{The $q$-ample cone of a toric variety} \label{cone}

In this section we recall some basic facts about cohomology of line bundles on toric varieties, together with Hering--K\"uronya--Payne's formula for calculating dimensions of cohomology groups. We then use to describe the structure of the cone of $q$-ample line bundles on a toric variety, and to show that it is characterised by the vanishing of asymptotic cohomological functions.


Let $X = X(\Delta)$ be a simplicial projective $n$-dimensional toric
variety, corresponding to some complete fan $\Delta$ in a lattice $N
\cong \Z^n$.  We denote by $\Delta(1)$ the set of rays of $\Delta$, and write $v_i$  for the primitive generators of the ray $i \in \Delta(1)$. There is a one-to-one
correspondence between prime torus-invariant divisors and rays \cite[Chapter 3]{Fulton}. We denote these divisors by $\{D_i
\mid i \in \Delta(1)\}$ and the free group generated by them by $\Z^{\Delta(1)}$. The
dual space $\Z_{\Delta(1)}$ is generated by the dual basis $\{e_i \mid i \in
\Delta(1) \}$.

Let $ M := \operatorname{Hom}(N, \Z) \cong \Z^n$ be the
dual lattice to $N$, with pairing $ \langle \cdot , \cdot \rangle
$. We have the following commutative diagram with exact
rows:

\begin{equation} \label{eqn:seseq} \xymatrix{
0 \ar[r] & M  \ar@{=}[d]      \ar[r] & \mathrm{Div}_T(X)   \ar@{^{(}->}[d]    \ar[r] & \Pic(X) \ar@{^{(}->}[d] \ar[r] & 0 \\
0 \ar[r] & M  \ar[r] & \Z^{\Delta(1)}  \ar[r]^{[-]} & Cl(X) \ar[r] & 0 \\
} \end{equation}
where $\mathrm{Div}_T(X)$ is the group of torus-invariant Cartier
divisors, and $Cl(X)$ is the class group. Applying the functor
$-\otimes_{\Z} \R$ and using the fact that simplicial toric varieties
are $\Q$-factorial, we obtain the following exact sequence
\begin{equation} \label{eqn:seseq2} \xymatrix{
0 \ar[r] & M_\R  \ar[r] & \R^{\Delta(1)}  \ar[r]^{[-]} & N^1(X) \ar[r] & 0 \\
} \end{equation}
where $N^1(X)$ denotes $\Pic(X) \otimes \R$.

For $I \subset \Delta(1)$, we define $\Delta_I$ to be the subfan of $\Delta$
consisting of cones whose rays belong to $I$. For a torus-invariant
divisor $D=\sum_\rho a_\rho D_\rho$, we define the polyhedral region
\begin{align} \label{polytope}
P_{D,I} = \left\{ u \in M_{\mathbf R} \, | \, \left<u,v_\rho \right> \geq -a_\rho \Leftrightarrow \rho \in I \right\}.
\end{align}
Note that for a positive integer $m$, we have $P_{mD,I}=mP_{D,I}$. 

Hering--K\"uronya--Payne \cite{HKP} gave a description of the cohomology of the
divisor $D$ in terms of local cohomology groups and lattice points in
$P_{D,I}$.  For a subfan $\Delta_I$, we denote the dimension of the
(topological) local cohomology group $H^i_{|\Delta_I|}(N_\mathbf R)$
with support in the subspace $|\Delta_I|$ by $h^i_{|\Delta_I|}$.

\begin{theorem}[Hering--K\"uronya--Payne]
For a line bundle $D$ on a simplicial projective toric variety we have
\begin{align*}
h^i(X,D) &= \sum_{I \subset \Delta(1)} h^i_{|\Delta_I|} \cdot \# \left( P_{D,I} \cap M \right)
\end{align*}
\end{theorem}

We also recall K\"uronya's definition of {\it asymptotic cohomological
  functions}. For an $n$-dimensional projective variety $X$ and a line
bundle $L$ on $X$, we define

\begin{align*}
\widehat{h}^i (L) = n! \lim_{m \to \infty} \frac{h^i(X,mL)}{m^n}
\end{align*}
Note that $\widehat{h}^0$ is just the usual volume function. K\"uronya showed that the functions $\widehat{h}^i$ give well-defined homogeneous continuous functions on the space $N^1(X)$. By Serre it is clear that for each $i >0$ the function $h^i$ vanishes identically on the nef cone; de Fernex--K\"uronya--Lazarsfeld \cite{dFKL} showed that in fact this characterises the nef cone. 

For a bounded polyhedron $P \in M_\mathbf R$, let $\operatorname{Vol}( P)$ denote the volume of $P$, normalised so that the smallest lattice simplex has unit volume. Then
\begin{align*}
  \operatorname{Vol} (P) &= n! \lim_{m \to \infty} \frac{\# mP \cap M}{m^n}.
\end{align*}
Combining the three displayed equations above then gives
\begin{align} \label{formula-hkp}
\widehat{h}^i (D) = \sum_{I \subset \Delta(1)} h^i_{|\Delta_I|} \cdot \operatorname{Vol} (P_{D,I}).
\end{align}

Finally we need the following elementary result.

\begin{lemma} \label{perturb}
Let $A$ be a real $m \times n$ matrix with $m\le n$, and let $b \in \mathbf R^m$. If the inequality $Ax \leq b$ has a solution, then there exists a vector $v \in \mathbf R^m$ such that for any $\epsilon>0$ the set $\{ x \in \mathbf R^n \, |\, Ax \leq b + \epsilon v \}$ is an $n$-dimensional polyhedron. 
\end{lemma}

Now we can state the main result of this section.

\begin{theorem} \label{thm-main}
Let $X$ be a simplicial projective toric variety and let $D=\sum_\rho
a_\rho D_\rho$ be a torus-invariant divisor on $X$. Then the following
are equivalent:
\begin{enumerate}
\item[(a)] $D$ is $q$-ample;
\item[(b)] For every ample divisor $A$, we have $H^i(X,mD-A)=0$ for $i>q$ and $m \gg0$;
\item[(c)] There exists an open neighbourhood $U$ of $[D] \in N^1(X)$ such that $\widehat{h}^i(x)=0$ for all $i>q$ and all $x \in U$.
\end{enumerate}
\end{theorem}
\begin{proof}
The equivalence $(a) \Leftrightarrow (b)$ is Proposition \ref{q-ample-prop}.


Next, if $D$ is $q$-ample, then immediately from the definition we get $\widehat{h}^i(D)=0$ for all $i>q$. Moreover, for each $q$, the $q$-ample cone is open in $N^1(X)$ \cite[Theorem 8.3]{Totaro}. This proves the implication $(a) \Rightarrow (c)$. 

So it suffices to prove the implication $(c) \Rightarrow (b)$. Fix $i>q$.  Let $I \subset \Delta(1)$ be a subset such that $h^i_{|\Delta_I|} >0$: that is, a subset which could contribute nonzero terms to $\widehat{h}^i$. Then for any $D' \in U$, the polyhedron $P_{D',I}$ must be the empty set: if it were not, by Lemma \ref{perturb} we could choose a class $E$ such that for all $0<\epsilon\ll1$, the perturbed class $D'+\epsilon E \in U$ has the property that $P_{D'+\epsilon E, J}$ has positive volume. But then formula \eqref{formula-hkp} implies that $\widehat{h}^i(X,D'+\epsilon E)$ is nonzero, contradicting our choice. Since for large $m$ we know that $D-\frac{1}{m}A$ lies in $U$, we must have that $P_{D-\frac{1}{m}A,I} = \emptyset$. By our earlier remark, this implies that $P_{mD-A,I}=\emptyset$ too. Since this is true for all $I$ which could contribute to $H^i$, we get that $H^i(X,mD-A)=0$ as required. 
\end{proof}

The theorem allows us to describe the $q$-ample cone of a toric variety. By definition, an $\mathbf R$-divisor $D$ is $q$-ample if it is numerically equivalent to a sum $cL+A$ where $L$ is a $q$-ample divisor, $c$ a positive real number, and $A$ is an ample $R$-divisor. The set of all $q$-ample $\R$-divisors defines an open cone in $N^1(X)$ whose integer points are exactly the $q$-ample divisors \cite[Theorem 8.3]{Totaro}. Chen--Lazarsfeld asked if, for Fano varieties, these cone are always the interior of a finite union of rational polyhedral cones. (This is the simplest possible generalisation of the Cone Theorem in this context, since these cones are known not to be convex in general.) Here we give a positive answer to the analogue of Chen--Lazarsfeld's question for toric varieties.

\begin{corollary} \label{cones}
If $X$ is a simplicial projective toric variety, then the closure of the $q$-ample cone $\overline{Amp}_q(X)$ is a union of rational polyhedral cones, for each $q \geq 0$.
\end{corollary}
\begin{proof}
Theorem \ref{thm-main} says that $\overline{Amp}_q(X) \subset N^1(X)$ is the common vanishing locus of the functions $\widehat{h}^i$ for $i>q$. Formula \eqref{formula-hkp} shows that $\widehat{h}^i(D)=0$ for all $i>q$ if and only if:
\begin{align*}
\operatorname{Vol}P_{D,I}=0 \text{ for all } I \subset \Delta(1) \text { such that } h^i_{|\Delta|_I} >0 \text{ for some } i>q.
\end{align*}
The basic point is that for each appropriate $I$, the subset in $\mathbf R^{\Delta(1)}$ of $D$ satisfying the above condition is cut out by a collection of rational hyperplanes, and the images of these hyperplanes in $N^1(X)$ then define the cone $\overline{Amp}_q(X)$. 

In more detail, let $I$ be any subset of $\Delta(1)$, and consider a divisor $D=\sum d_\rho D_\rho$. First, if $d_\rho>0$ for $\rho \in I$ and $d_\rho <0$ for $\rho \notin I$, then clearly $P_{D,I}$ contains a small ball around the origin: in particular, $\operatorname{Vol}P_{D,I}>0$. Note that if we replace $D$ with a linearly equivalent divisor $D'=D+\operatorname{div}(u)$ (for some rational function $u \in M$), the new polytope $P_{D',I}$ is just the translate $P_{D,I}-U$, so it also has positive volume.

 Conversely, if $P_{D,I}$ contains a ball around the origin, it is
 clear we must have $d_\rho>0$ for $\rho \in I$ and $d_\rho <0$ for
 $\rho \notin I$. Now if $D$ is any divisor such that
 $\operatorname{Vol}P_{D,I}>0$, then (perhaps after scaling $D$) there
 exists a rational function $u$ such that for the divisor
 $D'=D+\operatorname{div(u)}$, the polytope $P_{D',I}$ contains a
 small ball around the origin, and so $D'$ has the property stated.

To summarise, we have shown that $\operatorname{Vol}P_{D,I}>0$ if and only if $D$ is linearly equivalent to a divisor $D'=\sum d_\rho D_\rho$ with $d_\rho>0$ if and only if $\rho \in I$. The set of such divisors $D'$ forms an (open) orthant $O_I$ in $\mathbf R^{\Delta(1)}$, and so the closure of its image $\overline{[O_I]}$ in $N^1(X)$ is a rational polyhedral cone. Theorem \ref{thm-main} shows that $Amp_q(X)$ is the complement of the union (over a finite set of $I$) of the cones $\overline{[O_I]}$, and hence is the interior of a union of rational polyhedral cones. 
\end{proof}

\section{Examples} \label{examples}
In this section, we illustrate Theorem \ref{thm-main} by calculating
the $q$-ample cones of two families of examples: projective bundles
over $\mathbf P^1$, and toric $\mathbf P^1$-bundles over projective
spaces.

To make the calculations easier, we find it convenient to reformulate our earlier conditions on cohomology vanishing in terms of the polytope of our toric variety $X$. Recall that the polytope $P_X$ of $X$ is defined exactly as in formula \eqref{polytope} in Section \ref{cone}, where $D$ is chosen to be any ample divisor on $X$, and $J=\Delta(1)$. Broomhead \cite{Broomhead} showed how to calculate cohomology of line bundles on $X$ in terms of the topology of certain subspaces of $P_X$. For this statement, given a subset $I \subset \Delta(1)$, let $P_X^I$ denote the subset of $P_X$ consisting of the union of all top-dimensional faces corresponding to rays in $I$. The statement we need is the following:

\begin{proposition} \label{prop-orthants}
Let $X$ be a simplicial projective toric variety and $D=\sum a_\rho D_\rho$ a torus-invariant divisor. Then $\widehat{h}^i(D) \neq 0$ if and only if the following is true: there exists $I \subset \Delta(1)$ such that $\widetilde{H}^{i-1}(P_X^I) \neq 0$, and a divisor $D'=\sum d_\rho D_\rho$, linearly equivalent to $D$, such that $d_\rho < 0$ if and only if $\rho \in I$.  
\end{proposition}
\begin{proof}
According to formula \eqref{formula-hkp}, $\widehat{h}^i(D) \neq 0$ if and only if there exists a subset $J \subseteq \Delta(1)$ such that $H_{|\Delta_J|}^i \neq 0$ and $\operatorname{Vol} P_{D,J} \neq 0$. Fix any such subset $J$ and let $I = \Delta(1) \setminus J$. 

In the proof of Corollary \ref{cones} we saw that $\operatorname{Vol} P_{D,J}>0$ if and only if there exists $D'$ linearly equivalent to $D$ satisfying the stated condition. So it remains to prove that $H_{|\Delta_J|}^i \iso \widetilde{H}^{i-1}(P_X^I)$.
 
For this, denote by $S$ the unit sphere in the vector space $N_{\mathbf R}$. Then there is an isomorphism
\begin{align*}
H_{|\Delta_J|}^i \iso \widetilde{H}^{i-1}(S \setminus S \cap |\Delta_J|).
\end{align*}
The fan of $X$ induces the structure of a simplicial complex on $S$, and the polytope $P_X$ can be viewed as the dual complex. Then $S \setminus S \cap |\Delta_J|$ retracts onto $P_X^I$. Combining with the previous displayed isomorphism, this gives an isomorphism
\begin{align*}
H_{|\Delta_J|}^i \iso \widetilde{H}^{i-1}(P_X^I).
\end{align*} 
\end{proof}

We emphasise that this gives a method for computing $q$-ample cones in practice. Given $X$, we take
its polytope $P_X$. For each $i$, we enumerate the subsets $I \subset
\Delta (1)$ such that $\widetilde{H}^i(P_X^I)$ is nontrivial. Each
such subset $I$ defines an orthant $O_I$ in $\mathbf R^{\Delta(1)}$
consisting of line bundles whose $i$-th asymptotic cohomology has a
nonzero contribution from $I$. Let $[O_I]$ denote the image of this
orthant in $N^1(X)$. Then the proposition shows that
$Amp_q(X)$ is the complement of $\bigcup_I \overline{[O_I]}$, where
the union is over all $I \subset \Delta (1)$ such that
$\widetilde{H}^i(P_X^I) \neq 0$ for some $i \geq q$. The next two subsections will illustrate this algorithm.

\subsection{Bundles over $\mathbf P^1$}
 We follow the notation for projective bundles from \cite[Chapter
   7]{CLS}. Let $X = \mathbf P(\mathcal V)$, where $\mathcal V$ is a vector bundle over
 $\mathbf P^1$ of rank $n+1$. By the Birkhoff theorem
 $\mathcal V$ is a direct sum of line bundles; after twisting, we can assume
 without loss of generality that $\mathcal V = \mathcal{O}_{\P^1} \oplus
 \mathcal{O}_{\P^1}(a_1) \oplus \cdots \oplus
 \mathcal{O}_{\P^1}(a_n)$, where $0 \leq a_1 \leq \cdots \leq a_n$.

The fan of $X$ is described as follows. Let $\R \times \R^n$ have basis $v_1, e_1, \ldots, e_n$, and set $$e_0=-\sum_{i=1}^n e_i \,; \quad v_0=-v_1+\sum_{i=1}^n a_i e_i.$$
Then the vectors $e_0,\ldots,e_n,v_0,v_1$ span the rays of the fan of $X$, and the top-dimensional cones are of the following form:
\begin{align*}
\left< v_i, e_0,\ldots,\widehat{e_j},\ldots,e_n \right> \quad i \in \{0,1\}, \, j \in \{0,\ldots,n\}.
\end{align*}
All of these cones are simplicial, so the codimension-1 cones in the fan, corresponding to the torus-invariant curves on $X$, are obtained by omitting one spanning vector from one of the cones above. Using the intersection formulas from \cite[Section 5.1]{Fulton} it is straightforward to calculate intersections between torus-invariant curves and divisors. This allows us to identify ample divisors on $X$: in particular, we find that the divisor 
\begin{align*}
A & := \sum_{i=0}^n E_i + \left( \sum_{i=1}^n a_i +1 \right) \left(V_0
+ V_1 \right)
\end{align*}
is ample. (Here the $E_i$ and $V_j$ are the torus-invariant divisors corresponding to the vectors $e_i$ and $v_j$ generating rays of the fan: geometrically, $E_i$ is the sub-bundle of $X$ obtained by quotienting $V$ by the summand $\mathcal{O}(a_i)$, and $V_j$ is the fibre over one of the torus-invariant points of $\P^1$.)

Given the ample divisor $A$, we have the corresponding polytope $P_X$ as described above:
\begin{align*}
P_X = \left\{ u \in M_\R : \left<u,e_i\right> \geq -1, \, \left< u, v_j \right> \geq -\sum a_i -1 \right\}. 
\end{align*}
The inequalities involving the $e_i$ cut out a polyhedron of the form $\R \times \Delta^n$;  the inequalities with $v_0$ and $v_1$ then bound this in the direction of the $\R$-factor. It is straightforward to check that the faces of the polytope corresponding to $v_0$ and $v_1$ are disjoint, irrespective of the values of the $a_i$, so that $P_X$ is combintorially equivalent to the polytope $[0,1] \times \Delta^n$. The homology groups of unions of faces of this polytope are described in the following lemma.

\begin{lemma} Label the the faces of the polytope $P_X$ so that the unique pair of disjoint faces are labelled $F_{n+2}$ and $F_{n+3}$. Let $Y$ be a union of closed top-dimensional faces of $P_X$. Then its reduced homology groups are
\begin{align*}
\widetilde{H}_k(Y) = 
\begin{cases} 
\kk & \text{ if } Y=\partial P_X, \, k=n  \\
\kk & \text{ if } Y = \partial P_X \setminus \left\{ F_{n+2} \cup
  F_{n+3} \right\}, \, k=n-1 \\
\kk & \text{ if } Y = F_{n+2} \cup F_{n+3}, \, k=0 \\
\kk & \text{ if } Y = \emptyset, \, k=-1 \\
0 & \text{ otherwise. }
\end{cases}
\end{align*}
\end{lemma}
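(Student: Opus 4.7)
The plan is to apply the Nerve Lemma. The polytope $P_X$ is a CW complex whose facets $F_i$ are subcomplexes, and every non-empty intersection of facets is again a sub-polytope, hence convex and contractible. Consequently, for any $S \subseteq \{1,\ldots,n+2\}$ the closed union $Y = \bigcup_{i \in S} F_i$ is homotopy equivalent to its nerve $N_S$, the simplicial complex on vertex set $S$ whose simplices are the $T \subseteq S$ with $\bigcap_{i \in T} F_i \neq \emptyset$; it therefore suffices to compute $\tilde{H}_{\bullet}(N_S)$ for each possible $S$.

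First I would read off the face lattice of the chopped $n$-simplex $P_X$. It has $2n$ vertices: the $n$ original vertices $v_1,\ldots,v_n$ of $\Delta^n$ that are not chopped, plus $n$ new vertices $w_1,\ldots,w_n$ produced by truncating $v_0$. Direct inspection gives the vertex sets of the facets: $F_i$ for $i \leq n$ contains every vertex except $v_i$ and $w_i$, while $F_{n+1} = \{v_1,\ldots,v_n\}$ and $F_{n+2} = \{w_1,\ldots,w_n\}$. From this it follows that the only minimal empty intersections of facets are $F_1 \cap \cdots \cap F_n = \emptyset$ (the common vertex $v_0$ has been chopped away) and $F_{n+1} \cap F_{n+2} = \emptyset$ (the two disjoint facets assumed in the lemma). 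Equivalently, the minimal non-faces of the full nerve $N$ on $\{1,\ldots,n+2\}$ are exactly $\{1,\ldots,n\}$ and $\{n+1,n+2\}$.

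The proof then finishes by a short case analysis on $S$. If $S$ contains neither minimal non-face, every subset of $S$ is a simplex of $N_S$, so $N_S$ is a full simplex and $Y$ is contractible; together with the convention $\tilde{H}_{-1}(\emptyset)=\kk$ this disposes of the ``otherwise'' line. The three $S$ producing non-zero reduced homology are: $\{1,\ldots,n\}$, where $N_S = \partial\Delta^{n-1} \simeq S^{n-2}$, matching $Y = \partial P_X \setminus \{F_{n+1}\cup F_{n+2}\}$; $\{n+1,n+2\}$, where $N_S = S^0$, matching $Y = F_{n+1}\cup F_{n+2}$; and the full index set $\{1,\ldots,n+2\}$, where $N = \partial\Delta^{n-1} * S^0 \simeq S^{n-2} * S^0 \simeq S^{n-1}$, matching $Y = \partial P_X$. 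The remaining values of $S$---where exactly one minimal non-face is contained as a proper subset, namely $\{1,\ldots,n,n+1\}$, $\{1,\ldots,n,n+2\}$, and $\{n+1,n+2\}\cup R$ with $\emptyset \neq R \subsetneq \{1,\ldots,n\}$---give nerves that are easily seen to be contractible (either $\partial\Delta^n$ with one open facet removed, which is a closed $(n-1)$-disk, or two simplices sharing a common face).

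The only substantive step is the combinatorial bookkeeping identifying the two minimal non-faces of the nerve; after that the sphere identifications and contractibility checks are routine homotopy theory.
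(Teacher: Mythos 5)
Your proof is correct, but it takes a genuinely different route from the paper. The paper argues by induction off the cut face: it applies the long exact sequence of reduced homology for the cofibration $A \hookrightarrow B \to B/A$ with $B=Y$ and $A = Y \cap F_{n+2}$, thereby reducing every case to the homology of a union of facets of an honest simplex (or such a union disjoint from a point), where the key input is that a union of a \emph{proper} subset of the facets of a simplex is acyclic. You instead invoke the nerve lemma for the closed cover of $Y$ by the facets indexed by $S$, and reduce everything to the combinatorics of the face lattice: the nerve on $\{1,\dots,n+2\}$ has exactly two minimal non-faces, $\{1,\dots,n\}$ and $\{n+1,n+2\}$, so it is the join $\partial\Delta^{n-1} * S^0 \simeq S^{n-1}$, and the induced subcomplexes $N_S$ are either full simplices, $\partial\Delta^{n-1}$, $S^0$, the whole sphere, or cones/unions of two simplices along a common face --- which reproduces exactly the four nonzero lines of the lemma and the ``otherwise'' clause. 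Your identification of the vertex sets of the $F_i$ and of the two minimal empty intersections is accurate, and your case analysis on $S$ is exhaustive. The one point you should flag explicitly is that the textbook nerve lemma is stated for \emph{open} covers; you need the version for covers of a CW complex by subcomplexes all of whose nonempty intersections are contractible (e.g.\ Bj\"orner's form of the nerve theorem), which applies here because the facets and their intersections are faces of the polytope, hence convex subcomplexes --- you do gesture at this, and it is enough. In exchange for citing that slightly heavier tool, your argument is more systematic: it localizes all the topology in one homotopy equivalence and turns the rest into bookkeeping with minimal non-faces, and it would generalize immediately to any simple polytope whose facet intersections one can list, whereas the paper's pair-sequence argument is more elementary but requires a bespoke reduction for each shape of $Y$.
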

\begin{proof} The proof comes  from the long exact sequence
of reduced homology groups associated to a sequence $A \hookrightarrow B
\arrow B/A$, where $A$ is a (reasonable) closed subspace of a
topological space $B$. Applying this with $B=Y$, a union of
top-dimensional faces of $P_X$, and $A=Y \cap F_{n+2}$ we reduce the problem to calculating the reduced homology of either a union of faces of a simplex or the disjoint union of a point with a union of faces of a simplex. Using the fact that the union of any proper subset of faces of a simplex has no reduced homology, the result follows.
\end{proof}

This lemma immediately identifies the index sets $I_\alpha$ which give
nonzero contributions to cohomology of a line bundle, as described above. We have
\begin{align*}
I_0 &= \left\{ \alpha \subseteq \Delta(1) =\left\{1,\ldots,n+3\right\} \, | \,
  \widetilde{H}^0(Z_\alpha,\kk) \neq 0 \right\} = \left\{ \left\{n+2,n+3
  \right\} \right\} \\
I_1 &= \cdots = I_{n-2} = \emptyset \\
I_{n-1} &=  \left\{ \alpha \subseteq \Delta(1) \, | \,
  \widetilde{H}^{n-1}(Z_\alpha,\kk) \neq 0 \right\} = \left\{ \left\{1,\ldots,n+1 \right\} \right\} \\
I_n &= \left\{ \alpha \subseteq \Delta(1) \, | \,
  \widetilde{H}^n(Z_\alpha,\kk) \neq 0 \right\} = \left\{
  \left\{1,\ldots,n+3 \right\}\right\}. \\
\end{align*}
The corresponding orthants in $\R^{\Delta(1)} \iso \R^{n+3}$ are then
\begin{align*}
  O_{I_0} &= \left\{ \left(d_1,\ldots,d_{n+3} \right) \in \R^{n+3} \, |
    \, d_{n+2} <0, d_{n+3} <0, d_i \geq 0 \text{ for all } i=1,\ldots,n+1 \right\} \\
  O_{I_1} &= \cdots = O_{I_{n-2}} = \emptyset \\
  O_{I_{n-1}} &= \left\{ \left(d_1,\ldots,d_{n+3} \right) \in \R^{n+3}
    \, | \, d_{n+2} \geq 0, \, d_{n+3} \geq 0, \, d_i<0 \text{ for all }
    i=1,\ldots,n+1
  \right\} \\
  O_{I_n} &= \left\{ \left(d_1,\ldots,d_{n+3} \right) \in \R^{n+3}
    \, |
    \, d_i<0 \text{ for all } i=1,\ldots,n+3 \right\} \\
\end{align*}

Now we can calculate the $q$-ample cones $Amp_q(X)$. Proposition
\ref{prop-orthants} says that $Amp_q(X)$ is the complement in $\Pic(X)$ of
the union of the images of all the closed orthants
$\overline{O_{I_i}}$ for $i \geq q$.  The map $\R^{n+3} \arrow
\Pic(X)$ has kernel equal to the column space of the matrix
\begin{align*}
\begin{pmatrix}
1 & 0 & \cdots & 0 & 0 \\
0 & 1 & \cdots & 0 & 0 \\
\vdots & \vdots & \ddots & \vdots & \vdots \\
-1 & -1 & \cdots & -1 & 0 \\
0 & 0 & \cdots & 0 & 1 \\
a_1 & a_2 & \cdots & a_n & -1
\end{pmatrix}
\end{align*}
whose rows are the primitive vectors of the rays of the fan of $X$, expressed in the basis $\left< e_1,\ldots,e_n,v_1\right>$. So in $\Pic(X)$ we have $E_i=E_0-a_iV_0$ and $V_0=V_1$. Let us denote the latter linear equivalence class by $V$. The images of the closed orthants $\overline{O_{I_i}}$ above are then

\begin{align*}
\overline{O_{I_0}} &\mapsto \left<E_0,E_0-a_1V,\ldots,E_0-a_nV,-V,-V\right> = \left<E_0,-V \right> \\
\overline{O_{I_{n-1}}} &\mapsto \left<-E_0,-E_0+a_1V,\ldots,-E_0+a_nV,V,V \right> = \left<-E_0,V \right> \\
\overline{O_{I_n}} &\mapsto \left<-E_0,-E_0+a_1V,\ldots,-E_0+a_nV,-V,-V \right> = \left<-E_0+a_nV,-V \right>
\end{align*}
where the last equality uses the fact that $a_i  \leq a_n$ for all $i$. 

Putting these regions together as described in Theorem \ref{thm-main}, we get the following result:

\begin{align*}
\overline{Amp_0(X)} &= \left< E_0, V \right> \\
\overline{Amp_q(X)} &= \left<V,-V\right>, \,\,\, 0<q<n\\
\overline{Amp_n(X)} &= \Pic(X)_\R \setminus \left<-E_0+a_nV,-V \right>.
\end{align*}

\subsection{$\P^1$-bundles over projective space}
In a similar way we can calculate the $q$-ample cones of a toric
$\P^1$-bundle over any projective space. Let $X=\P( \mathcal O_{\P^n}
\oplus \mathcal O_{\P^n} (a) )$.  Then the fan of $X$ is the
following: Let $\R^n \times \R$ have basis $v_1,\ldots,v_n,e_1$ and
set $$e_0=-e_1\, ; \ v_0 = -\sum_{i=1}^n v_i+ ae_1.$$ Then the maximal
cones in the fan are of the form
\begin{align*}
\left< v_0,\ldots,\widehat{v_i},\ldots, v_n,e_j \right> \quad \, i \in \{0,\ldots,n\}, \, j \in \{0,1\}. 
\end{align*}
Computing intersection numbers with torus-invariant curves shows that the divisor
$$ A:= \left(a+1 \right) (V_0+\cdots+V_n) + E_0 +E_1 $$ is ample; as
before, one finds that the polytope $P_A$ is combinatorially
equivalent to $\Delta^n \times [0,1]$. Repeating the process above, we
obtain the following result for the $q$-ample cones:

\begin{align*}
\overline{Amp_0(X)} &= \left< E_0, V \right> \\
\overline{Amp_q(X)} &= \left< V, -V \right> \\
\overline{Amp_n(X)} &= \Pic(X)_\R \setminus \left< -E_0+aV,-V \right>. 
\end{align*}
The orthants of cohomology nonvanishing and the $q$-ample cones in this example are shown in the figure below.

\begin{figure}[h]
\captionsetup[subfigure]{font=footnotesize}
\centering
\subcaptionbox{Orthants of cohomology nonvanishing}[.5\textwidth]{%
\begin{tikzpicture}[scale=0.75]
\draw[thin][draw=white][pattern=north east lines][pattern color=gray] (0,-2) -- (0,0) --
(2,0) -- (2,-2) --cycle;

\draw[dotted] (-2,0) -- (2,0) [shift={(0.3,0)}] node{\tiny{$E_0$}};
\draw[dotted] (0,-2) -- (0,2) [shift={(0,0.3)}] node{\tiny{$V$}};
\draw[<->][very thick] (0,-2.1) -- (0,0) -- (2.05,0);
\draw (1,-1) node{$\overline{[O_{I_0}]}$};

\draw [shift={(5,0)}][thin][draw=white][pattern=north east lines][pattern color=gray] (-2,0) -- (0,0) --
(0,2) -- (-2,2) --cycle;

\draw  [shift={(5,0)}][dotted] (-2,0) -- (2,0) [shift={(0.3,0)}] node{\tiny{$E_0$}};
\draw  [shift={(5,0)}][dotted] (0,-2) -- (0,2) [shift={(0,0.3)}] node{\tiny{$V$}};
\draw [shift={(5,0)}][<->][very thick] (-2.1,0) -- (0,0) -- (0,2.1);
\draw [shift={(5,0)}] (-1,1) node{$\overline{[O_{I_{n-1}}]}$};


\draw [shift={(2.5,-5)}][ thin][draw=white][pattern=north east lines][pattern color=gray] (-2,2) -- (0,0) --
(0,-2) -- (-2,-2) --cycle;

\draw [shift={(2.5,-5)}][<->][very thick] (0,-2.1) -- (0,0) -- (-2.1,2.1) [shift={(0.6,0.1)}] node{\tiny{$-E_0+aV$}};

\draw  [shift={(2.5,-5)}][dotted] (-2,0) -- (2,0) [shift={(0.3,0)}] node{\tiny{$E_0$}};
\draw  [shift={(2.5,-5)}][dotted] (0,-2) -- (0,2) [shift={(0,0.3)}] node{\tiny{$V$}};
\draw [shift={(2.5,-5)}] (-1,-1) node{$\overline{[O_{I_n}]}$};
\end{tikzpicture}}%
\subcaptionbox{The $q$-ample cones for $\P(\O \oplus \O(a))$}[.5\textwidth]{\begin{tikzpicture}[scale=0.75]
\draw[thin][draw=white][pattern=north east lines][pattern color=gray]
(2,0) -- (0,0) -- (0,2) --(2,2)--cycle;

\draw[dotted][<->][very thick] (2.1,0) -- (0,0) -- (0,2.05);
\draw[dotted] (-2,0) -- (2,0) [shift={(0.3,0)}] node{\tiny{$E_0$}};
\draw[dotted] (0,-2) -- (0,2) [shift={(0,0.3)}] node{\tiny{$V$}};

\draw (1,1) node{$Amp_0$};

\draw [shift={(5,0)}][thin][draw=white][pattern=north east lines][pattern color=gray] (0,2) -- (2,2) --
(2,-2)--(0,-2) --cycle;

\draw [shift={(5,0)}][<->][very thick][dotted] (0,2.1) -- (0,0) -- (0,-2.1);
\draw  [shift={(5,0)}][dotted] (0,-2) -- (0,2) [shift={(0,0.3)}] node{\tiny{$V$}};
\draw [shift={(5,0)}] (2.5,0) node{$Amp_q \, (0 < q < n)$};


\draw [shift={(2.5,-5)}][thin][draw=white][pattern=north east lines][pattern color=gray] (-2,2) -- (0,0) --
(0,-2) -- (2,-2) --(2,2) --cycle;

\draw [shift={(2.5,-5)}][<->][very thick][dotted] (0,-2.1) -- (0,0) -- (-2.1,2.1) [shift={(0.6,0.1)}] node{\tiny{$-E_0+aV$}};

\draw  [shift={(2.5,-5)}][dotted] (-2,0) -- (2,0) [shift={(0.2,0)}] node{\tiny{$E_0$}};
\draw  [shift={(2.5,-5)}][dotted] (0,-2) -- (0,2) [shift={(0,0.3)}] node{\tiny{$V$}};
\draw [shift={(2.5,-5)}] (1,1) node{$Amp_n$};
\end{tikzpicture}}
\end{figure}

~\newline
\noindent As a remark, it is well-known (\cite[General Properties
  1.5]{DPS}, \cite[Theorem 9.1]{Totaro}) that, for any projective
variety of dimension $d$, the $(d-1)$-ample cone is the complement in
$N^1(X)$ of the negative of the pseudoeffecitve cone. This gives an
easier way to calculate the cones $Amp_n$ in the examples
above. Similarly, the toric version of Kleiman's criterion gives the
ample cone $Amp_0$.

\section{Big $q-$ample line bundles} \label{restriction}

In this section we show that for big line bundles on toric varieties, $q$-ampleness can be detected by restriction to torus-invariant divisors. This can be deduced from a theorem of Brown \cite{Brown}, who proved that for big line bundles on arbitrary projective varieties, $q$-ampleness can be detected by restriction to the augmented base locus. We give the proof in the toric case here since it is simple and self-contained.

\begin{theorem}\label{toricrestriction}
Let $X$ be a simplicial toric variety and $L$ a big line bundle. Then $L$ is $q$-ample if and only if $L|_E$ is $q-$ample on each torus invariant divisor $E$.
\end{theorem}
\begin{proof}In one direction, if $L$ is $q$-ample, then so is its restriction to each subvariety of $X$ from the definition of $q$-ampleness.

For the other direction, by Proposition \ref{q-ample-prop} it is
enough to show that for a locally free sheaf $\E$ on $X$ there exists
$m_0>0$ such that for all $m \geq m_0$ and all $i > q$, the cohomology
groups $H^i(X,\E(mL))$ are zero.

Let $E_1,\ldots,E_r$ be the set of torus invariant divisors on $X$. If $L$ is big, then there is a positive integer $k$ such that $kL$ has a section of the form  $s=x_1^{m_r}\cdots x_r^{m_r}$, where $x_i$ is a section that defines $E_i$ and each $m_i$ is strictly positive. Let $D\in |kL|$ denote the divisor of $s$, supported on the union of the $E_i$:  by hypothesis, $L$ is $q$-ample on each $E_i$, and hence on $D$ and $D_{red}=E_1\cup \cdots \cup E_r$ by \cite[Proposition 2.3]{Ottem}. 

Now, if $\E$ is a locally free sheaf on $X$, we have the exact sequence
$$
0\to \E((m-1)D)\to \E(mD)\to \E(mD)\otimes \O_D\to 0
$$which, by the $q$-ampleness of $L$ on $D$, shows that $H^{i}(X,\E((m-1)L)\to H^{i}(X, \E(mL))$ is surjective for $i>q$ and $m$ large. It follows that there is an $m_0>0$ such that for each $i>q$, the canonical map
$$H^i(X,\E(mD))\to \varinjlim H^i(X,\E(mD))\simeq H^i(X-D,\E)
$$ is an isomorphism for each $m\ge m_0$. But the complement of $D$ is the torus $(\mathbb{C}^*)^{\text{dim} \, X}\subset X$. In particular, $X-D$ is an affine variety, and all higher cohomology groups vanish here. Hence $H^i(X,\E(m_0L))=0$ for each $i>q$ and $L$ is $q$-ample. 
\end{proof}



We  remark that the above proof applies unchanged to any
$\Q$-factorial Mori dream space $X$ if we let the $E_i$ denote any set
of divisors whose linear equivalence classes span the effective cone
of $X$. The fact that the complement of the union of the $E_i$ is
still affine follows from the fact that there is an embedding $X
\hookrightarrow T$ into a toric variety such that the $E_i$ are
exactly the restriction of the torus-invariant divisors of $T$: hence
$X-\cup_iE_i$ is the intersection of $X$ with a torus in $T$, and so
is affine.

\section{$q$-ampleness of $-K_X$} \label{flip}
A natural question is how to describe varieties for which $-K_X$ is $q$-ample, for different values of $q$. When $q=0$, this means $X$ is a Fano variety. When $q=\dim X-1$, as mentioned in Section \ref{examples}, this means that $K_X$ is not pseduoeffective, which in turn by Boucksom--Demailly--P\u{a}un--Peternell \cite{BDPP} means $X$ is uniruled. The geometric meaning of $q$-ampleness of $-K$ remains unclear for the intermediate cases $0<q<\dim X-1$. For instance, if $X$ is a threefold with $-K_X$ 1-ample, then $X$ need not be rationally connected: an example is  $X=\mathbb P(\Omega^1_S)$, where $S$ is a general quartic surface \cite[Example 5.6]{DPS}.

 For 3-folds, Demailly--Peternell--Schneider \cite[Problem 5.9]{DPS} asked whether 1-ampleness of $-K_X$ is preserved under flips. The following example gives a negative answer to this question. 

We consider two projective toric varieties $X=X(\Delta_1)$ and $Y=Y(\Delta_2)$ whose fans have rays spanned by the columns of the matrix
$$\left(
\begin{array}{cccccc}
 1 & 0 & 0 & 2 & 1 & -1 \\
 0 & 1 & 0 & -1 & 0 & 0 \\
 0 & 0 & 1 & 1 & -1 & 0
\end{array}
\right)$$and whose maximal cones are the following:
\begin{eqnarray*}\Delta_1&:&\{\langle 0,1,2 \rangle ,\langle 0,2,3\rangle ,\langle 0,3,4\rangle , \langle 0,4,1\rangle ,\langle 5,1,2\rangle , \langle 5,2,3\rangle , \langle5,3,4\rangle , \langle5,4,1\rangle \}\\
\Delta_2&:&\{\langle 0,1,3\rangle ,\langle 1,2,3 \rangle ,\langle 0,3,4 \rangle,\langle 0,4,1\rangle ,\langle 5,1,2\rangle ,\langle 5,2,3\rangle ,\langle 5,3,4\rangle ,\langle 5,4,1\rangle \}
\end{eqnarray*}
(Here an integer $i$ denotes the ray spanned by the $i$-th column of the matrix.) 

The variety $X$ is smooth, while $Y$ has exactly one singular point, which is a $\Z/2$-quotient singularity. The fans of $X$ and $Y$ are both refinements of the fan $\Delta_3$ obtained by replacing the first two cones in either fan above by the non-simplical cone $\langle 0,1,2,3 \rangle$. If $Z$ is the toric variety defined by $\Delta_3$, then both $X$ and $Y$ are partial resolutions of $Z$; in particular there is a birational map $\phi: X \dashrightarrow Y$. The indeterminacy locus of $\phi$ is the rational curve $C$ corresponding to the cone $\langle 0,2 \rangle$. One calculates that $K_X \cdot C <0$, and so $\phi: X \dashrightarrow Y$ is a flip.

\begin{proposition}
Let $X$ and $Y$ be as above. Then $-K_X$ is 1-ample, while $-K_Y$ is not.
\end{proposition}

\begin{proof}
On any simplicial toric variety, the anticanonical divisor is big, so we can apply Theorem \ref{toricrestriction} to the divisors $-K_X$ and $-K_Y$. One checks easily that $-K_X$ restricts to a 1-ample line bundle on each torus-invariant surface in $X$. (Recall that the 1-ample cone of a surface is the complement of the negative of the pseudoeffective cone.) On the other hand, it is straightforward to check using the formulas of \cite[Section 5.1]{Fulton} that the ($\mathbf Q$-Cartier) divisor $-K_Y$ is numerically trivial when restricted to the divisor $D_0$ corresponding to the vector $(1,0,0)$, so it cannot be 1-ample. 
\end{proof}

\section{A Kodaira-type vanishing theorem} \label{kodaira}
One reason for studying partial positivity comes from the possibility of vanishing theorems. Unfortunately, the analogue of the Kodaira vanishing theorem does not hold for $q$-ample line bundles in general: in fact, it fails already in the case of the 3-dimensional flag variety $SL_3/B$. In this section, we will show that the Kodaira vanishing does hold on a projective toric variety.

We mention that Greb--K\"uronya \cite{GK} proved a related vanishing theorem for $q$-ample line bundles under the additional assumption that the line bundle admits a global section with mild singularities (so that the usual proof of Kodaira's vanishing theorem using Hodge theory goes through for higher $q$). In our case, however, we do not require the line bundle to be effective.

The main ingredient of the proof is \emph{multiplication maps}. Let $X=X(\Delta)$ be the toric variety defined by a fan $\Delta$ in a
lattice $N$. For each integer $m \geq 0$ we have the
multiplication-by-$m$ (or ``toric Frobenius") map $F_m: N \arrow N$. This induces a finite
surjective toric morphism $f:X \rightarrow X$ with the property that
$\mathcal O_X \arrow f_* \mathcal O_X$ splits \cite[Proposition 3.1]{Payne}. If $L$ is any line bundle on $X$, this gives a split injection 
\begin{align} \label{isos}
L \arrow L \otimes f_* \mathcal O_X &\iso f_*(f^* L) \iso f_*(L^m).
\end{align}
The existence of this split injection quickly yields the following:
\begin{theorem}[Kodaira-type vanishing]
Let $X$ be a projective Cohen--Macaulay toric variety, and let $L$ be a $q$-ample line bundle on $X$. Then 
\begin{align*}
H^i(X,L^{-1})=0 \text{ for } i<n-q.
\end{align*}
\end{theorem}
\begin{proof} Let $F_m$ be the multiplication-by-$m$ map and $f :X \rightarrow X$ the corresponding map on $X$.
 Since cohomology commutes with direct sums, by (\ref{isos}) we get, for each $i \geq 0$, a split injection
\begin{align} \label{inj}
H^i(X,L^{-1}) \arrow & H^i(X,f_*(L^{-m})) \iso  H^i(X,L^{-m})
\end{align}
where the last isomorphism comes from the Leray spectral sequence and
finiteness of $f$. 

Now let $\omega_X^\circ$ be the dualising sheaf of $X$
\cite[Proposition 7.5]{Hartshorne}. Then by Serre duality the
rightmost group in \eqref{inj} is dual to $H^{n-i}(X,L^m \otimes
\omega_X^\circ)$, so by $q$-ampleness of $L$ it vanishes if $n-i>q$
and $m$ is sufficiently large.
\end{proof}

\small
{\sc Insitut f\"ur Algebraische Geometrie, Leibniz Universit\"at Hannover, Welfengarten 1, Hannover 30167, Germany.} {\tt broomhead@math.uni-hannover.de}

{\sc DPMMS, University of Cambridge, Wilberforce Road, Cambridge CB3 0WB, United Kingdom} {\tt J.C.Ottem@dpmms.cam.ac.uk}

{\sc Department of Mathematical Sciences, Loughborough University, LE11 3TU, United Kingdom.} {\tt A.Prendergast-Smith@lboro.ac.uk
\end{document}